\newtheorem{theorem}{Theorem}[section]
\newtheorem{lemma}[theorem]{Lemma}
\theoremstyle{definition}
\newtheorem*{defn}{Definition}
\newcommand{\Irr}{{\mathrm {Irr}}}
\newcommand{\Nd}{{\mathfrak{N}_d}}
\newcommand{\N}{{\mathfrak{N}}}
\newcommand{\m}{{\mathfrak{M}}}
\newcommand{\acd}{{\mathrm {acd}}}
\newcommand{\Aut}{{\mathrm {Aut}}}
\newcommand{\PSL}{{\mathrm {PSL}}}
\newcommand{\SL}{{\mathrm {SL}}}
\begin{document}
	
	\title{Erratum to:A generalization of Taketa's theorem on $\rm M$-groups}
	
	\author[Z. Akhlaghi]{Zeinab Akhlaghi}
	\address{Zeinab Akhlaghi, Faculty of Math. and Computer Sci., \newline Amirkabir University of Technology (Tehran Polytechnic), 15914 Tehran, Iran.\newline
		and \newline
		School of Mathematics,
		Institute for Research in Fundamental Science(IPM)
		P.O. Box:19395-5746, Tehran, Iran.}
	\email{z\_akhlaghi@aut.ac.ir}

	\thanks{
		The  author  is supported by a grant from IPM (No. 1401200113).}
	\subjclass[2000]{20C15,20D15}
	
	\begin{abstract}  In the recent paper  [A generalization of Taketa's theorem on $M$-groups, Quaestiones Mathematicae, (2022), https://doi.org/10.2989/16073606.2022.2081632], we give an upper bound $5/2$ for the average of non-monomial character degrees of a  finite group  $G$, denoted by $\acd_{nm}(G)$, which guarantees the solvability of $G$. Although the result is true,  the   example we gave  to show that the bound is sharp turns out to be  incorrect. In this paper we find  a new bound and we give an example to show that this new bound is sharp.  Indeed,  we prove the solvability of  $G$,  by assuming   $\acd_{nm}(G)< \acd_{nm}(\SL_2(5))=19/7$. 
	\end{abstract}
	\keywords{Monomial character, Primitive character,  Taketa's Theorem,  Average degree}


	\maketitle
	
	\section{Introduction}
	
	This paper is a corrigendum to our paper [A generalization of Taketa's Theorem on $M$-groups, Quaestiones Mathematicae, (2022), https://doi.org/10.2989/16073606.2022.2081632], which will  refer to as \cite{khodom}, hereafter. 
	In \cite{khodom}, denoting by $\acd_{nm}(G)$ the average of non-monomial irreducible characters of $G$, we proved that if $\acd_{nm}(G)< 5/2$, then $G$ is solvable. This result is true, however the bound is not sharp. In fact, after the paper was published, we found  a mistake in  calculating $\acd_{nm}(\rm{S}_5)$  which we  have  miscalculated to be  $5/2$, while it is equal to $10/3>5/2$. Thus, however the proof of the main result in \cite{khodom} is correct, the bound is not sharp. In this paper we aim to to find  the best possible bound and prove the same result for this new bound. Indeed we  prove the solvability of $G$, by assuming  $\acd_{nm}(G)< \acd_{mn}(\SL_2(5))=19/7$.

	Let $N$ be a normal subgroup of $G$ and $\lambda \in \Irr(N)$. Then  $\Irr(G|N)$, $\Irr_{nm}(G|N)$ and $\Irr(G|\lambda)$ denote the set of  irreducible characters of $G$ whose kernels do not contain $N$, the set of the non-monomial  irreducible characters of $G$ whose kernels do not contain $N$ and the set of the irreducible characters of $G$ above $\lambda$, respectively.   By  $\acd_{nm}(G|N)$  we mean   the average  degree of irreducible characters in $\Irr_{nm}(G|N)$. We use the notation $n_d(G)$,  $\Nd(G)$, $n_d(G|N)$ and $\Nd(G|N)$ for the number of irreducible characters of $G$ of degree $d$,  the number of non-monomial irreducible characters of $G$ of degree $d$, the number of irreducible characters of $G$ of degree $d$ whose kernels do not contain $N$ and the number of non-monomial irreducible characters of $G$ of degree $d$ whose kernels do not contain $N$,  respectively.    For the rest of notation, we follow \cite{Isaacs}.

	\section{Main Results}
	In the following we bring up some results and  a generalized definition of a monomial group which is taken from \cite{lmt} and \cite{qianpri}.  
	
	\begin{defn}
		A non-linear character $\chi \in \Irr(G)$ is called a multiply
		imprimitive character (or $m.i$ character for short) induced from the pair $(U, \lambda)$ if there exist a proper subgroup $U$ of $G$ and an irreducible character $\lambda \in \Irr(U)$ such
		that $\lambda ^G=m\chi$
		for some nonnegative integer $m$. 
	\end{defn}
	
	Clearly, if an irreducible character  is not a $m.i$ character, then it is primitive (in \cite[Definition 3.2]{qianpri} they are  called  super-primitive character).  The proof of the following Lemma is  essentially taken from \cite{lmt}. 
	
	\begin{lemma}\label{lmt}
		If $S$ is a non-abelian simple group, then $S$ has a non-linear   irreducible
		character $\chi$ of degree at least $4$ which is extendible to $\Aut(S)$ and $\chi$ is not an $m.i$ character ($ \chi$ is a  super-primitive character). In particular, if $S\not \cong {\rm A}_5$, then $\chi(1)\geq 5$. 
	\end{lemma}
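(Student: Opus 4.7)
The plan is to invoke the Classification of Finite Simple Groups and to treat the alternating, sporadic, and Lie type families separately, in each case exhibiting a specific irreducible character $\chi$ with the three required properties: (i) $\chi(1)\geq 4$ (and $\chi(1)\geq 5$ whenever $S\not\cong A_5$); (ii) $\chi$ is extendible to $\Aut(S)$; and (iii) $\chi$ is not an $m.i$ character.

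For $S$ of Lie type, which is the generic case, the natural choice is the Steinberg character $\mathrm{St}$ of $S$. Its degree equals $|S|_p$ for the defining characteristic $p$, which exceeds $5$ outside the very small cases (such as $\PSL_2(4)\cong A_5$); its extendibility to $\Aut(S)$ is classical; and its super-primitivity can be established by a $p$-part argument, namely that any identity $\lambda^G=m\,\mathrm{St}$ forces the supporting subgroup $U$ to contain a Sylow $p$-subgroup of $S$, hence to sit inside a parabolic subgroup, and the alternating-sum description of $\mathrm{St}$ as a virtual principal-series character then rules out such an identity.

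For the alternating groups $A_n$ with $n\geq 5$, I would take, for $A_5$, the degree-$4$ character obtained by restricting the standard reflection character of $S_5$; it extends to $S_5=\Aut(A_5)$, and inspection of its restrictions to the maximal subgroups of $A_5$ (namely $A_4$, $D_{10}$ and $S_3$) shows it is super-primitive. For $n\geq 6$ one chooses an $S_n$-invariant irreducible character of $A_n$ labelled by a self-conjugate partition and of degree at least $5$, and verifies super-primitivity from the indices of the maximal subgroups of $A_n$, with a minor adjustment in the case $n=6$ to accommodate the exceptional outer automorphism.

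The sporadic simple groups (and the Tits group) are handled individually by means of the ATLAS: in each case one exhibits an $\Aut(S)$-invariant irreducible character of degree at least $5$ and checks super-primitivity by comparing its degree against the indices of the maximal subgroups and the degrees of their irreducible characters. The main obstacle, I expect, is the uniform verification of all three properties for the small-rank Lie type groups such as $\PSL_2(q)$, $\PSU_3(q)$, ${}^2B_2(q)$ and ${}^2G_2(q)$, where the generic Steinberg argument can degenerate (either because $\mathrm{St}(1)$ is too small, because extendibility requires extra cohomological input, or because the parabolic reduction leaves a residual possibility); each of these groups must then be examined separately.
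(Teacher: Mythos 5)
Your overall architecture (CFSG, then Steinberg for Lie type, a specific character for alternating groups, ATLAS for sporadics) is exactly the paper's, except that the paper does not reprove any of these verifications: it imports them wholesale from Le--Moori--Tong-Viet \cite{lmt} (Lemma 3.8 for the Steinberg character, the proof of Proposition 4.1 for ${\rm A}_n$, Table 2 for the sporadics). Measured against what a complete proof requires, your proposal has two concrete problems. First, the alternating case is stated backwards: an irreducible character of ${\rm A}_n$ coming from a \emph{self-conjugate} partition is precisely one that does \emph{not} extend from ${\rm S}_n$ --- the self-conjugate partitions are the ones whose ${\rm S}_n$-character splits on restriction into a pair of ${\rm A}_n$-characters interchanged by an odd permutation, so they are not even ${\rm S}_n$-invariant. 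What you want is a non-self-conjugate partition; the paper (following \cite{lmt}) uses the standard character of degree $n-1\geq 6$, i.e.\ the partition $(n-1,1)$, for $n\geq 7$.

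Second, you correctly sense that the Steinberg argument degenerates in small rank, but you leave that case open, and the nature of the failure is worse than the three possibilities you list. For $\PSL_2(7)$ the Steinberg character has degree $7$ (or $8$ in the $\PSL_3(2)$ realization) and extends to $\Aut(S)$, yet it is genuinely \emph{monomial} --- induced from a linear character of the index-$7$ subgroup ${\rm S}_4$ (respectively of the index-$8$ Frobenius subgroup $7{:}3$) --- so no amount of further examination rescues it; one must substitute a different character. The complete list of exceptions in \cite[Lemma 3.8]{lmt} is $S\cong {\rm A}_5$, $\PSL_2(7)$ and $\PSp_4(3)$, where the paper replaces the Steinberg character by characters of degrees $4$, $6$ and $2^6$ respectively, checked against the ATLAS. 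Until you identify this exceptional list and exhibit the replacement characters, the Lie-type case --- which is where the ``in particular'' clause about ${\rm A}_5\cong\PSL_2(4)$ lives --- is not actually proved.
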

	\begin{proof}
		First, let $S$ be a simple group of Lie type, then using \cite[Lemma 3.8]{lmt} the Steinberg character of $S$  is  not a $m.i$ character and it is  extendable to $\Aut(S)$ (see for example \cite{f}), except for  the cases $S\cong A_5$, $\PSL_2(7)$  or $S_4(3)$. One can check that in those remaining cases  $S$ has an  irreducible character  of degree $4$, $6$ and $2^6$, respectively, which is not a $m.i $ character and it is  extendable to $\Aut(S)$ (see \cite{atlas}), as wanted.
		
		Next, let $S$ be  an alternating group of degree $n\geq 7$. By the proof of \cite[Proposition 4.1]{lmt}, we see that $S$ has an   irreducible character $\chi$ of degree $n-1\geq 6$ which is extendable to $\Aut(S)$ and it is not a $m.i$ character.  
		
		At last, let $S$ be a sporadic simple group or the Tits group. Then according to the the proof of \cite[Proposition 5.1]{lmt} if  for each $S$ listed in the first column of Table 2 of  \cite{lmt} we take  $\chi$ to be the character in the  third column of Table 2 of  \cite{lmt}, then $\chi$ satisfies  the  hypothesis of the  Lemma.    
	\end{proof}
	
	\begin{lemma}(See \cite[Lemma 5]{ema}) \label{ema}
		Let $ N $ be a minimal normal subgroup of $ G $ such that $ N =
		S_{1} \times \dots \times S_{t} $, where $ S_{i} \cong S$, a non-abelian simple group. If $ \sigma \in \Irr(S)$ extends to $ \Aut(S) $, then	$ \sigma \times  \dots \times \sigma \in \Irr(N) $ extends to $ G $.
	\end{lemma}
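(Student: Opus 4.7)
\medskip

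\noindent\textbf{Proof plan.} Set $C = \Centralizer_G(N)$. Because $N$ is a direct product of non-abelian simple groups, $\Center(N) = 1$, so $N \cap C = 1$; consequently the conjugation action $G \to \Aut(N)$ has kernel $C$ and gives an embedding $G/C \hookrightarrow \Aut(N) = \Aut(S) \wr \mathrm{Sym}_t$ in which $NC/C \cong N$ is identified with the inner automorphism subgroup $\mathrm{Inn}(N)$. My plan is to build an irreducible extension of $\sigma \times \cdots \times \sigma$ all the way up to $\Aut(N)$, restrict that extension to the subgroup $G/C$, and then inflate to $G$.

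First I would construct the extension to $\Aut(N)$ explicitly. Let $\rho \colon \Aut(S) \to \mathrm{GL}(V)$ afford an extension $\hat{\sigma}$ of $\sigma$ to $\Aut(S)$. Define an action of $\Aut(S) \wr \mathrm{Sym}_t$ on $V^{\otimes t}$ by letting $(g_1,\dots,g_t) \in \Aut(S)^t$ act as $\rho(g_1) \otimes \cdots \otimes \rho(g_t)$, and letting $\pi \in \mathrm{Sym}_t$ permute simple tensors via $v_1 \otimes \cdots \otimes v_t \mapsto v_{\pi^{-1}(1)} \otimes \cdots \otimes v_{\pi^{-1}(t)}$. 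A direct check shows this is a well-defined representation of the wreath product whose character $\hat{\chi}$ restricts on $\Aut(S)^t$ to $\hat{\sigma} \times \cdots \times \hat{\sigma}$; in particular $\hat{\chi}|_N = \sigma \times \cdots \times \sigma$ once $N$ is identified with $\mathrm{Inn}(S)^t$.

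Next I would restrict $\hat{\chi}$ to the subgroup $G/C$ and verify irreducibility. The degree $\sigma(1)^t$ is preserved, and the further restriction to $N$ equals the irreducible character $\sigma \times \cdots \times \sigma$; hence any decomposition $\hat{\chi}|_{G/C} = \alpha + \beta$ into nonzero characters would induce a nontrivial decomposition of $\sigma \times \cdots \times \sigma$ after restriction to $N$, which is impossible. Inflating through $G \twoheadrightarrow G/C$ then produces an irreducible character of $G$ whose restriction to $N$ is $\sigma \times \cdots \times \sigma$, as required.

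The step I expect to be the main technical obstacle is the explicit construction of the extension to $\Aut(N)$: one must verify that the $\mathrm{Sym}_t$-action by tensor permutation genuinely intertwines with the diagonal $\Aut(S)^t$-action, equivalently that the natural projective obstruction to extending the $\mathrm{Sym}_t$-invariant character $\hat{\sigma} \times \cdots \times \hat{\sigma}$ vanishes. Once the extension to $\Aut(N)$ is in hand, the subsequent Clifford-type irreducibility argument on $G/C$ and the final inflation through $G \twoheadrightarrow G/C$ are routine.
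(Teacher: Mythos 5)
Your argument is correct and is essentially the standard proof of this fact: the paper itself gives no proof but cites \cite[Lemma 5]{ema}, and the argument there proceeds exactly as you describe, by extending $\hat{\sigma}\times\cdots\times\hat{\sigma}$ to $\Aut(S)\wr \mathrm{Sym}_t$ on $V^{\otimes t}$ via tensor-factor permutation, restricting to the image of $G/\Centralizer_G(N)$ (irreducible since its restriction to $N$ already is), and inflating to $G$. No gaps.
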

	
\begin{lemma}\label{qpp}
	Assume  $N\cong S^k$  is a non-abelian  minimal normal subgroup of a finite group $G$, for some simple group $S$ and integer $k$. Let either  $\phi\in \Irr(G/N)$ be primitive and  $\sigma \in \Irr(S)$  be a  super-primitive character; or $\phi\in \Irr(G/N)$ be linear  and  $\sigma \in \Irr(S)$  be primitive character. If $\chi$ is an extension of the product of  all different conjugate of $\sigma$ to $G$, then $\chi\phi$ is primitive.
\end{lemma}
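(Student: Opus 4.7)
The plan is to argue by contradiction. Suppose $\chi\phi$ is not primitive, so $\chi\phi=\psi^G$ for some $\psi\in\Irr(H)$ with $H$ a proper subgroup of $G$.

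\smallskip

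\emph{Reduction.} First I would reduce to the case where either $N\subseteq H$, or $HN=G$ with $N\not\subseteq H$. If $HN\subsetneq G$, set $M:=HN$, decompose $\psi^M=\sum n_i\omega_i$ in $\Irr(M)$, and observe that $(\psi^M)^G=\psi^G=\chi\phi$ is irreducible while each $\omega_i^G$ is a nonzero character; comparing constituents forces $\psi^M$ itself to be irreducible. Replacing $(H,\psi)$ by $(M,\psi^M)$ then puts me in the case $N\subseteq H<G$.

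\smallskip

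\emph{Case 1: $N\subseteq H$.} Put $\theta:=\sigma\times\cdots\times\sigma\in\Irr(N)$. Then $\chi_H$ is an extension of $\theta$ to $H$; since $\psi^G=\chi\phi$ lies above $\theta$, so does $\psi$, and Gallagher's theorem gives $\psi=\chi_H\beta$ for a unique $\beta\in\Irr(H/N)$. A direct computation using Frobenius reciprocity yields the identity
\[
(\chi_H\beta)^G=\chi\cdot\beta^G,
\]
so $\chi\phi=\chi\cdot\beta^G$. As $\beta^G$ is a character of $G/N$, the Gallagher bijection forces $\beta^G=\phi$. In case~(a) this contradicts the primitivity of $\phi$; in case~(b), $1=\phi(1)=\beta^G(1)\geq[G:H]\geq 2$, again impossible.

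\smallskip

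\emph{Case 2: $HN=G$, $N\not\subseteq H$.} Put $L:=H\cap N$, a proper subgroup of $N$. Since $\phi$ is trivial on $N$, $(\chi\phi)|_N=\phi(1)\,\theta$, and Mackey (with a single double coset from $G=HN$) gives $(\psi^G)|_N=(\psi_L)^N$. Hence any irreducible constituent $\mu$ of $\psi_L$ satisfies $\mu^N=t\,\theta$ for some $t\geq 1$; in case~(b), $\phi(1)=1$ forces $t=1$ and $\psi_L=\mu$ to be itself irreducible.

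Next I verify that $L\cap S_1<S_1$ after re-indexing. Because $N$ is minimal normal, $G$ permutes $\{S_1,\ldots,S_k\}$ transitively; since $N$ normalizes each $S_i$, this action factors through $G/N\cong HN/N\cong H/L$, so $H$ itself is transitive on the factors. As $L\trianglelefteq H$, the subgroups $L\cap S_i$ are $H$-conjugate, and $L\subsetneq N=\langle S_1,\ldots,S_k\rangle$ prevents them from all equaling $S_i$; after re-indexing, $L_1:=L\cap S_1<S_1$.

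I then restrict $\mu^N=t\theta$ to $S_1$. The right-hand side becomes $t\sigma(1)^{k-1}\sigma$, a nonnegative multiple of the single irreducible character $\sigma$ of $S\cong S_1$. The left-hand side expands by Mackey as
\[
\mu^N\big|_{S_1}=\sum_{x\in S_1\backslash N/L}\bigl(\mu^x|_{S_1\cap L^x}\bigr)^{S_1},
\]
and the normality of $S_1$ in $N$ ensures that every $S_1\cap L^x$ is an $N$-conjugate of $L_1$, hence proper in $S_1$. Since induced characters decompose into irreducibles with nonnegative multiplicities, no cancellation is possible: every Mackey summand must itself be a multiple of $\sigma$. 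Extracting an irreducible constituent produces a pair $(L_1',\lambda)$ with $L_1'<S_1$ conjugate to $L_1$ and $\lambda\in\Irr(L_1')$ satisfying $\lambda^{S_1}=m\sigma$ for some $m\geq 1$. This is precisely the condition that $\sigma$ is multiply imprimitive on $S$, contradicting super-primitivity of $\sigma$ in case~(a). In case~(b) the uniformity of the Mackey summands forced by $H$-transitivity, combined with $t=1$, yields $m=1$, so $\sigma=\lambda^{S_1}$ is an honest induction, contradicting the primitivity of $\sigma$.

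\smallskip

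\emph{Main obstacle.} The delicate part is Case~2, where I must translate (multiple) imprimitivity of $\sigma^{\otimes k}$ on $N=S^k$ into (multiple) imprimitivity of $\sigma$ on $S$. Two ingredients drive the argument: (i) the transitivity of $H$ on the simple factors, which needs $G=HN$ together with the fact that $N$ acts trivially on the permutation set of factors; and (ii) the no-cancellation principle for Mackey decompositions of induced characters. The two hypothesis options in the lemma correspond to which conclusion one can extract: super-primitivity of $\sigma$ is strong enough to rule out the general multiply imprimitive situation arising when $\phi$ has arbitrary degree, while primitivity alone suffices in the linear case because $\phi(1)=1$ forces $t=1$ and, through the degree count, $m=1$ as well.
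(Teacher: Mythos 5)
The paper does not actually prove this lemma from scratch: it verifies Qian's Hypothesis 1.1 and then quotes \cite[Corollary 3.4]{qianpri}. Your proposal is therefore a genuinely different route --- essentially an attempt to reprove Qian's result by hand. Much of it is sound: the reduction to the two cases $N\subseteq H$ or $HN=G$ is correct; Case 1 (Gallagher plus the projection formula $(\chi_H\beta)^G=\chi\cdot\beta^G$, forcing $\phi=\beta^G$) correctly contradicts primitivity, resp.\ linearity, of $\phi$; and in Case 2 the Mackey computation with the single double coset $G=HN$, the transitivity of $H$ on the simple factors, and the no-cancellation argument do legitimately produce a proper subgroup $L_1^x<S_1$ and $\lambda\in\Irr(L_1^x)$ with $\lambda^{S_1}=m\sigma$, $m\geq 1$. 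Under hypothesis (a) this is exactly the statement that $\sigma$ is an $m.i$ character, contradicting super-primitivity, so that branch is complete.

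The gap is in Case 2 under hypothesis (b), where $\sigma$ is only assumed primitive and you must therefore land on $m=1$, i.e.\ on an honest induction $\lambda^{S_1}=\sigma$. Your justification --- ``the uniformity of the Mackey summands forced by $H$-transitivity, combined with $t=1$, yields $m=1$'' --- does not hold up. The uniformity of the summands is automatic (each equals $c\sigma$ with $c=[S_1:L_1]\mu(1)/\sigma(1)$, and $[N:S_1L]\cdot c=\sigma(1)^{k-1}$ is just the degree identity $[N:L]\mu(1)=\theta(1)$), and decomposing $\mu_{L_1}=\sum_\lambda e_\lambda\lambda$ only gives $\sum_\lambda e_\lambda m_\lambda=c$ with each $m_\lambda=[S_1:L_1]\lambda(1)/\sigma(1)\geq 1$; nothing here prevents every $m_\lambda$ from being at least $2$. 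Converting ``$\theta=\sigma^{\times k}$ is imprimitive on $N$'' into ``$\sigma$ is imprimitive on $S$'' (rather than merely multiply imprimitive) is precisely the nontrivial content of \cite[Theorem 3.3 and Corollary 3.4]{qianpri}, and it needs a further argument --- e.g.\ exploiting that $\mu^N=\theta$ with multiplicity one, not just the degree bookkeeping. As written, hypothesis (b) of the lemma is not established.
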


\begin{proof}
	Note that $N$ and $G$ satisfies \cite[Hypothesis 1.1]{qianpri} and $\chi$ is an extension of $\theta$ the product of all distinct $G$-conjugate of $\sigma$ (see the explanation before \cite[Lemma 3.1]{qianpri}). By  applying    \cite[Corollary 3.4]{qianpri},  $\chi\phi$ is  primitive. 
\end{proof}

\begin{lemma}\label{qp}(See \cite[Lemma 2.8]{qp}.)
	Let $1<N$  be a normal subgroup of $G$ with $N = N'$, and suppose that a non-trivial $ \theta \in \Irr(N)$
	extends to $\chi \in \Irr(G)$. If $\chi=\lambda^G$, where $\lambda \in  \Irr(H)$ for some subgroup $H$ of $G$, then the following results are
	true:\\

	(1) $HN = G$, and $\theta=(\lambda_{H\cap N})^N$,  where $\lambda_{H\cap N}$ is irreducible. 
	
	(2) If in addition $\theta(1)$ is minimal among all non-trivial irreducible character degrees of $N$, then $\chi$ is primitive.
\end{lemma}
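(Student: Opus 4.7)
My plan is to handle part (1) via a Mackey decomposition that exploits the normality of $N$, and then deduce part (2) by a clean contradiction based on the permutation character $1_{H\cap N}^N$.

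For part (1), I would apply Mackey's formula to $(\lambda^G)|_N$. Because $N \trianglelefteq G$, for each $t \in G$ one has $N \cap H^t = (N \cap H)^t$, and the commutation of induction with conjugation along elements normalizing $N$ gives $((\lambda|_{H\cap N})^t)^N = ((\lambda|_{H \cap N})^N)^t$. Writing $\mu := \lambda|_{H \cap N}$, Mackey therefore collapses to
\[
\theta \;=\; \chi|_N \;=\; \sum_{t \in [H \backslash G / N]} (\mu^N)^t,
\]
a sum of nonzero characters of $N$. Irreducibility of $\theta$ forces exactly one summand, which simultaneously yields $G = HN$ and $\theta = \mu^N$. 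Finally, if $\mu$ decomposed as a sum $\alpha + \beta$ of nonzero characters, then $\mu^N = \alpha^N + \beta^N$ would contradict irreducibility of $\theta$; hence $\mu \in \Irr(H \cap N)$, completing (1).

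For part (2), I would argue by contradiction. Suppose $\chi = \lambda^G$ with $H$ a proper subgroup of $G$. Part (1) gives $G = HN$, and $H \ne G$ then forces $H \cap N \lneq N$. Setting $\mu := \lambda|_{H \cap N} \in \Irr(H \cap N)$, the identity $\theta = \mu^N$ yields the degree relation $\theta(1) = [N : H \cap N]\,\mu(1) \geq [N : H \cap N]$. The key move is to consider the permutation character $1_{H \cap N}^N$, which has degree $[N : H \cap N] \geq 2$. By Frobenius reciprocity $1_N$ occurs in it with multiplicity exactly one, so at least one nontrivial irreducible constituent appears, and every such constituent has degree $\geq \theta(1)$ by minimality of $\theta(1)$. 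Therefore
\[
[N : H \cap N] \;\geq\; 1 + \theta(1).
\]
Combined with the previous inequality this forces $\theta(1) \geq [N : H \cap N] \geq 1 + \theta(1)$, the desired contradiction.

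The nontrivial step is spotting that $1_{H \cap N}^N$ supplies the needed lower bound on $[N : H \cap N]$; everything else is routine manipulation of Mackey and Frobenius reciprocity. The hypothesis $N = N'$ does not enter the bookkeeping explicitly in the skeleton above, but it is natural in context: perfectness of $N$ ensures $\theta(1) \geq 2$ and keeps the situation nondegenerate in the applications that invoke this lemma.
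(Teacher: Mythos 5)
Your proof is correct. Note that the paper itself offers no proof of this lemma --- it is quoted verbatim from Qian \cite[Lemma 2.8]{qp} --- so there is nothing internal to compare against; your Mackey-decomposition argument for (1) and the counting of constituents of $1_{H\cap N}^N$ against the minimality of $\theta(1)$ for (2) are exactly the standard route (and essentially Qian's original one). Your closing remark is also accurate: the hypothesis $N=N'$ is not needed for the bookkeeping, serving only to guarantee that every non-trivial $\theta\in\Irr(N)$ is non-linear.
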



\begin{theorem}
	Let $G$ be a finite group.  If $\acd_{nm}(G)< 19/7$, then $G$ is solvable.
\end{theorem}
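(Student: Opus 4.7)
The plan is to argue by induction on $|G|$, taking $G$ to be a minimal counterexample: $G$ is non-solvable with $\acd_{nm}(G) < 19/7$. Let $N$ be a minimal normal subgroup of $G$. The character set decomposes as $\Irr_{nm}(G) = \Irr_{nm}(G/N) \sqcup \Irr_{nm}(G|N)$, so $\acd_{nm}(G)$ is a weighted average of $\acd_{nm}(G/N)$ and $\acd_{nm}(G|N)$. The strategy is to bound each of these via (i) the induction hypothesis applied to the smaller non-solvable quotient $G/N$, and (ii) a primitive-character production scheme built from Lemmas \ref{lmt}--\ref{qp}, and then combine them.

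Suppose first that $N$ is non-abelian, so $N \cong S^k$ for a non-abelian simple group $S$ and some $k \geq 1$. By Lemma \ref{lmt}, there is a super-primitive $\sigma \in \Irr(S)$ of degree at least $4$ (at least $5$ if $S \not\cong {\rm A}_5$) that extends to $\Aut(S)$; by Lemma \ref{ema}, the product $\sigma \times \cdots \times \sigma \in \Irr(N)$ extends to some $\chi \in \Irr(G)$ of degree $\sigma(1)^k$. Lemma \ref{qpp} guarantees that for every primitive $\phi \in \Irr(G/N)$ the product $\chi\phi$ is primitive in $\Irr(G)$, hence non-linear and non-monomial, and lies in $\Irr(G|N)$ with degree $\sigma(1)^k \phi(1) \geq 4$; its second clause, combined with Lemma \ref{qp} (applicable since $N$ is perfect), supplies further primitive characters in $\Irr(G|N)$. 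Counting these yields a lower bound on $\acd_{nm}(G|N)$, which together with $\acd_{nm}(G/N) \geq 19/7$ from induction forces $\acd_{nm}(G) \geq 19/7$.

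If instead $N$ is abelian, then $G/N$ is non-solvable and smaller, so by minimality $\acd_{nm}(G/N) \geq 19/7$. However, the naive bound $\acd_{nm}(G|N) \geq 19/7$ can fail here: for $G = \SL_2(5)$ and $N = \Center(G) \cong \mathbb{Z}/2\mathbb{Z}$, one has $\acd_{nm}(G|N) = 8/3 < 19/7$, yet the weighted average against $\acd_{nm}({\rm A}_5) = 11/4$ equals exactly $19/7$. In this case the argument must use Clifford theory applied to the inertia group $I_G(\lambda)$ for non-trivial $\lambda \in \Irr(N)$ and track precisely how the primitive characters of $G/N$ combine with Clifford extensions above $\lambda$ to produce enough high-degree non-monomial characters of $G$ to offset the shortfall of $\acd_{nm}(G|N)$.

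The main obstacle is the abelian case at the extremal configuration of $\SL_2(5)$: the bookkeeping must be sharp enough to realize equality precisely for $\SL_2(5)$ while excluding every other possibility. In particular, when $G/N$ has a non-abelian chief factor isomorphic to ${\rm A}_5$, one must enumerate the low-degree non-monomial characters of $G$ lying over the abelian minimal normal subgroup and verify that their contribution, together with the inductive lower bound on $\acd_{nm}(G/N)$, already realizes or exceeds $19/7$ unless $G \cong \SL_2(5)$. The case $S \cong {\rm A}_5$ within the non-abelian analysis is similarly delicate, since the choice $\sigma(1) = 4$ leaves minimal slack above the target $19/7 \approx 2.714$ and forces a careful comparison of numerator and denominator contributions from $\Irr(G/N)$ and $\Irr(G|N)$.
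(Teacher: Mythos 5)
Your skeleton---minimal counterexample, the split $\Irr_{nm}(G)=\Irr_{nm}(G/N)\sqcup\Irr_{nm}(G|N)$, and the use of Lemmas \ref{lmt}, \ref{ema}, \ref{qpp} to manufacture primitive characters above a non-abelian minimal normal subgroup---matches the paper's, and you have correctly identified the central product with $\SL_2(5)$ as the extremal configuration. But there are two genuine gaps. First, in the non-abelian case you invoke $\acd_{nm}(G/N)\ge 19/7$ ``from induction,'' which requires $G/N$ to be non-solvable; nothing guarantees this ($G$ could be simple, or $G/N$ solvable). Moreover the ``weighted average of two averages'' logic would also need $\acd_{nm}(G|N)\ge 19/7$, which you do not establish. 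The paper avoids both problems: it observes that every non-monomial character of $G$ of degree $1$ or $2$ contains $N$ in its kernel, so $\N_d(G)=\N_d(G/N)$ for $d=1,2$, and then pairs each such character $\phi$ injectively (via Gallagher and Lemma \ref{qpp}) with a primitive character $\chi\phi$ of degree $d_0\phi(1)$, where $d_0\ge 5$ when $S\not\cong{\rm A}_5$; the resulting inequality $\sum_{d_0\mid d}(7d-19)\N_d(G)\ge(7d_0-19)(\N_1(G)+\N_2(G))$ contradicts the defining inequality of the counterexample with no induction at all. The subcase $S\cong{\rm A}_5$ (where $d_0=4$ and $7d_0-19=9<12$) needs additional primitive characters of degree $3$ (when $G=\Centralizer_G(M)M$) or degree $5$ (when $G/\Centralizer_G(M)\cong{\rm S}_5$), which your count does not produce.

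Second, the abelian case is only described, not proved, and it is where all the work lies. The paper does not take an arbitrary abelian minimal normal subgroup: it takes $M\trianglelefteq G$ minimal non-solvable and chooses $T\le M$ minimal normal in $G$, placed inside $[M,R]$ ($R$ the solvable radical of $M$) whenever that is non-trivial. Minimality of $|G|$ applied to $G/T$ forces $\N_2(G/T)<\N_2(G)$, i.e.\ a primitive degree-$2$ character above $T$; the classification of non-solvable primitive linear groups of degree $2$ then gives $G/C\cong{\rm A}_5$, and the choice of $T$ together with the three subgroups lemma forces $G=MC$ to be a central product with $M\cong\SL_2(5)$ and $T={\bf Z}(M)\cong C_2$. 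Only after this structural reduction does the explicit count (using \cite[Lemma 2]{moreto} and Lemmas \ref{qp}, \ref{qpp} to certify which extensions above $T$ and which characters of $G/T$ are primitive) yield $\acd_{nm}(G)\ge 19/7$. Your proposal states that this bookkeeping ``must'' be done but supplies neither the structural reduction nor the count, so the case in which the bound is actually attained remains open in your argument.
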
 
\begin{proof}
	Assume, on the contrary,   $G$ is an example  with minimal order, such that $G$ is non-solvable and  $\acd_{nm}(G)< 19/7$.  Thus,
	$$\acd_{nm}(G)=\frac{\sum \limits_{\chi\in \Irr_{nm}(G)}\chi(1)}{|\Irr_{nm}(G)|}<  19/7.$$ 
	
	Then, $\sum \limits_{\chi\in \Irr_{mn}(G)}\chi(1)=\sum\limits_{d\geq 1}d\Nd(G)$ and $|\Irr_{mn}(G)|=\sum\limits_{d\geq 1}\Nd(G)$.  So by the above inequality we have
	
	$$\sum\limits_{d\geq 3}(7d-19)\Nd(G)< 12 \N_1(G)+ 5\N_2(G). \   \   \   \  \  \  \  \  (*)$$
	
	First, we claim that there is no non-solvable minimal normal subgroup of $G$ contained in $G'$. On the contrary, let $M\leq G'$ be a  non-solvable minimal normal subgroup of $G$. Then $M$ is a direct product of $k$ copies of a  non-abelian finite simple group $S$, for some integer $k$. By the hypothesis $M$ is contained in the kernel of every linear character of $G$. We show that $M$ is contained in the kernel of  every irreducible  character of $G$  of degree $2$. Let $\chi\in \Irr(G)$ such that $\chi(1)=2$. Since, non-abelian finite simple groups do not have any irreducible character of degree $2$ and the only linear character of a simple group is the principle character, then  $\chi_M=2.1_M$. Therefore $M$ lies in the kernel of $\chi$, as wanted. Hence $n_d(G)=n_d(G/M)$, for  $d=1,2$ and so $\N_d(G)=\N_d(G/M)$
	for $d=1,2$.

	By Lemmas \ref{lmt},  \ref{ema} and \ref{qpp}, $M$ has a primitive irreducible character $\theta$ with degree  $d_0\geq 4$ which is extendable to  a primitive irreducible character of  $G$.  
	Note that if $\phi\in \Irr_{nm}(G/M)$ and $\phi(1)=p$ for some prime $p$ then $\phi$ is primitive. Hence the number of primitive characters of degree $p$ of $G/M$, for some prime $p$, is $\N_p(G/M)$.  
	Then, by Gallagher's theorem (see \cite[Corollary 6.17]{Isaacs}) and Lemma \ref{qpp}, we have $\N_1(G)+\N_2(G)=\N_1(G/M)+ \N_2(G/M)\leq \N_{d_0}(G|M)+ \N_{2d_0}(G|M)$ and so   $\N_1(G)+\N_2(G) \leq \sum\limits_{d_0\mid d}\N_{d}(G|M)$.
	
	
	Therefore,  $$\N_1(G)+ \N_2(G)\leq \sum\limits_{d_0\mid d}\N_d(G).$$ 
	
	Hence,  
	$$\sum\limits_{d_0\mid d}(7d-19)\N_{d}(G) \geq (7d_0-19) (\N_2(G) +\N_1(G)). \ \ \ \ \ (**)$$
	If $M\not\cong \rm{A}_5$, then $d_0\geq 5$ by Lemma \ref{lmt}, and the non-equality  $(**)$ contradicts $(*)$.		 
	Now, let $M\cong \rm{A}_5$ and set $C=C_G(M)$. We know that $MC/C\leq G/C \leq \Aut(MC/C)\cong \rm{S}_5$. First, let $G=MC\cong C\times M$. Then, $M$ contains three non-linear  primitive characters, two of degree $3$ and one of degree $4$. Note that in this case $(**)$ holds with $d_0=4$, which means that 
		$$\sum\limits_{4\mid d}(7d-19)\N_{d}(G) \geq 9 (\N_2(G) +\N_1(G)).$$
	On the other hand, by Lemma \ref{qpp}, the extensions of irreducible  characters of degree $3$ of $M$ to $G$ are primitive, yielding that  $\N_3(G)\geq \N_3(G|M)=2\N_1(G/M)=2\N_1(G)$. 	Hence,    
		$$\sum\limits_{d\geq 3}(7d-19)\N_{d}(G)\geq \sum\limits_{d\geq 4}(7d-19)\N_{d}(G) + 2\N_3(G) \geq 9 (\N_2(G) +\N_1(G))+4\N_1(G)\geq 5\N_2(G)+12\N_1(G),$$
	which  is contradicting  $(*)$. So, we may assume $G\not=CM$,  implying that $G/C\cong \rm{S}_5$. Recall that $M$ contains a character $\chi\in \Irr(M)$ of degree $5$ which is extendable to $G$. We show that all extensions of $\chi$ to $G$ are primitive. On the contrary, assume $\chi_0\in \Irr(G)$ is an extension of $\chi$ that is not primitive, which means that there exists a subgroup $H< G$ and a linear character $\eta\in \Irr(H)$ such that $\chi_0=\eta^G$. Remark that $G/C\cong {\rm S}_5$ has a primitive extension of $\chi$, say $\psi$. By Gallagher's theorem \cite[Theorem 6.17]{Isaacs}, $\psi=\chi_0\lambda$ for some linear character $\lambda\in \Irr(G/M)$. As $\psi=\chi_0\lambda=\eta^G\lambda=(\eta\lambda_H)^G$, (see \cite[Problem 5.3]{Isaacs}), we get that $\psi$ is not primitive, a contradiction. Hence,  all extensions of $\chi$ to $G$ are primitive. 	
	 Thus, $\N_5(G)\geq \N_5(G|M)=\N_1(G/M)=\N_1(G).$ Again using, $(**)$, we have  $$\sum\limits_{ d\geq 4}(7d-19)\N_{d}(G)\geq \sum\limits_{4\mid  d}(7d-19)\N_{d}(G) + 16\N_5(G) \geq 9 (\N_2(G) +\N_1(G))+16\N_1(G)\geq 5\N_2(G)+12\N_1(G),$$       
	which is a contradiction. 

	Therefore, our claim is proved. Hence, we  may assume  that every  minimal normal subgroup of $G$ contained in $G'$ is solvable.  Let $M\unlhd G$   be minimal such that $M$ is non-solvable. Notice that  $M$ is a perfect group contained in the last term of derived series of $G$. Let $T\leq M$, such that $T$ is a minimal normal subgroup of $G$. In addition, if $[M, R]\not =1$, we assume $T\leq [M,R]$, where $R$ is the radical solvable of $M$.
	Therefore $T\leq M'\leq G'$, so $T$ is solvable  and then  $G/T$ is non-solvable. As $G$ is a counterexample of minimal order, we  have $\acd_{nm}(G/T)
	\geq  19/7$  and so it  follows by arguing
	exactly as in the first paragraph of the proof that
	
	$$\sum\limits_{
		d\geq 3}(7d-19)\N_d(G/T) 
	\geq  \N_2(G/T) + 12\N_1(G/T).\  \  \  \  (**)$$  
	Noting $\N_1(G/T)=\N_1(G)$, we get that 
	$\N_2(G/T) <\N_2(G)$ from $(*)$ and $(**)$.
	Hence $\Irr_{nm}(G|T)$ contains a character of degree $2$, say $\chi$.  If $K=\ker(\chi)$, then $G/K$ is a primitive linear
	group of degree 2 (see \cite[Chapter 14]{Isaacs}). By the classification of the non-solvable primitive
	linear groups of degree 2 (see \cite[Theorem 14.23]{Isaacs})  	
	we have $G/C\cong A_5$, where $C/K=Z(G/K)$. This implies that  $G=MC$. 				  
	Recall that
	$M/(M\cap C)\cong MC/C= G/C$ and
	$M\cap C\unlhd M$ is a proper subgroup of $M$. Therefore,
	$ M\cap C $  is a subgroup of radical solvable subgroup of $M$ by the
	minimality of $M$. Since $M/(M\cap C)$ is simple, we obtain that $M\cap C=R$, where $R$ is the radical solvable subgroup of $M$,
	and hence $R\leq C$. Thus $[M, R] \leq K$. But $ T \not\leq K$, so we have $ T \not\leq
	[M, R]$. By the choice of $T$, we have $R = {\bf Z}(M)$. Therefore,
	$M$ is a perfect central cover of the simple group $M/{\bf Z}(M) = M/(M \cap C) \cong G/C$.
	Since $C$ and $M$ are both normal in $G$, we have $[M, C]  \leq C \cap M =
	{\bf Z}(M)$, and so $[C, M, M] = [M, C, M] = 1$. By the three subgroups lemma, we deduce that
	$[M, M, C] = 1$ and hence $[M, C] = 1$ as $M$ is perfect. We conclude that $G = MC$ is
	a central product with a central subgroup $ M \cap  C =  {\bf Z}(M) \not = 1$. 				  
	Thus, by the choice of $T$, we get that $T= M\cap C={\bf Z}(M)$. As, ${\bf Z}(M)$ lies in the Schur multiplier of $M/{\bf Z}(M)\cong \rm A_5$, we deduce that ${\bf Z}(M)=T\cong C_2$.	
	So  $M\cong \SL_2(5)$ and  $\Irr(G|T)= \Irr(G|\lambda)$, where $\lambda$ is the only  non-trivial character of $T$. 
	Recall that  $ \Irr(M|\lambda)$ contains two  primitive characters of degree $2$, one primitive character  of degree $4$, and one character of degree $6$  and   $\Irr_{nm}(G|M)$ contains a primitive character $\chi$ of degree $2$, which is an  extension of one of the irreducible characters of degree $2$ of $M$.  By \cite[Lemma 2]{moreto}, $\chi(1)=\beta(1)\alpha(1)$ where $\beta\in \Irr(C|T)$ and $\alpha\in \Irr(M|T)$. As $\Irr(M|T)$ does not contain any linear character, $\beta$ is a linear character which means $\lambda$ extends to $C$. Applying \cite[Lemma 2]{moreto}, for every $\psi\in \Irr(M|T)$, we have  $\Irr(G|T)$ contains a character  of degree $\psi(1)\beta(1)$, which clearly is the extension of   $\psi$. By Lemma \ref{qp}(1), we deduce that every extension of $\psi$ is primitive, if $\psi(1)\in \{2,4\}$. Therefore, $ \N_4(G|T)=\N_1(G/M)=n_1(G/M)$ and $ \N_2(G|T)=2\N_1(G/M)=2n_1(G/M)$. Then  
	
	$$\acd_{nm}(G|T)=\frac{4n_1(G/M)+ 4n_1(G/M) +\sum\limits_{d\geq 6}d\N_d(G|T)}{2n_1(G/M)+ n_1(G/M) +\sum\limits_{d\geq 6}\N_d(G|T)}=\frac{8n_1(G/M) +\sum\limits_{d\geq 6}d\N_d(G|T)}{3n_1(G/M) +\sum\limits_{d\geq 6}\N_d(G|T)}.$$
	
	On the other hand,  $G/T\cong C/T\times M/T\cong C/T\times \PSL_2(5)$. Then using Lemma \ref{qp}(1) all irreducible characters $\lambda\times \mu\in \Irr(C/T\times \PSL_2(5))$ are non-monomial, provided that  $\mu \in \Irr(\PSL_2(5))$  is a  non-monomial  character and $\lambda$ is a linear character of $C/T$, which means $\mu$  is one of those irreducible characters of  $\PSL_2(5)$ of  degree $3$ or $4$. Also, by \ref{qpp}, if $\mu$ is the only super-primitive character of $M/T$ of degree $4$ and $\lambda$ is a primitive character of $C/T$, then $\lambda\times \mu$ is primitive.  We denote by $\m_d(G/T)$  the number of non-monomial   irreducible characters of  $G/T$ of degree $d$ in form of $\lambda\times \mu$, where either $\mu\in \Irr(\PSL_2(5))$ has degree  $1$ or $5$;  $\mu$ has degree $3$ and $\lambda$ is not linear; or $\mu$ has order $4$ and $\lambda(1)>2$. Clearly $\m_d(G/T)=\N_d(G/T)=\N_d(G/M)$ for $d=1,2$ and $\N_1(G/M)=n_1(G/M)$. Therefore, by the above argument,   
			$$\acd_{nm}(G)=\frac{\sum\limits_{d\geq 1}d\N_d(G|T)+\sum\limits_{d\geq 1}d\N_d(G/T)}{|\Irr_{nm}(G|T)|+|\Irr_{nm}(G/T)|}=$$$$
		\frac{10n_1(G/M)+ 8\N_2(G/M)+ \sum\limits_{1\leq d\leq 2}d\N_d(G/M)+ \sum\limits_{d\geq 3}d\m_d(G/T) +8n_1(G/M)+\sum\limits_{d\geq 6}d\N_d(G|T)}{3n_1(G/M)+ \N_2(G/M)+\sum\limits_{1\leq d\leq 2}\N_d(G/M)+  \sum\limits_{ d\geq 3}\m_d(G/T)+3n_1(G/M)+ \sum\limits_{d\geq 6}\N_d(G|T)}=$$$$
		\frac{19n_1(G/M)+ 10\N_2(G/M)+  \sum\limits_{d\geq 3}d\m_d(G/T) +\sum\limits_{d\geq 6}d\N_d(G|T)}{7n_1(G/M)+ 2\N_2(G/M)+  \sum\limits_{ d\geq 3}\m_d(G/T)+ \sum\limits_{d\geq 6}\N_d(G|T)}.$$
		 Therefore, 
	$$7(19n_1(G/M)+ 10\N_2(G/M)+  \sum\limits_{d\geq 3}d\m_d(G/T) +\sum\limits_{d\geq 6}d\N_d(G|T))$$
	$$\geq  7(19n_1(G/M)+ 10\N_2(G/M)+ \sum\limits_{d\geq 3}3\m_d(G/T) +\sum\limits_{d\geq 6}6\N_d(G|T))$$
		$$\geq 19(7n_1(G/M)+ 2\N_2(G/M)+  \sum\limits_{ d\geq 3}\m_d(G/T)+ \sum\limits_{d\geq 6}\N_d(G|T)),$$
	which means    $\acd_{nm}(G)\geq  19/7$. This contradiction proves the theorem. 
\end{proof}


{\bf Acknowledgment. } The author would like to thank  Neda Ahanjide, for bringing the error in \cite{khodom} to her attention.

\end{document}